\newtheorem{theorem}{Theorem}[section]
\newtheorem{definition}[theorem]{Definition}
\newtheorem{lemma}[theorem]{Lemma}
\newtheorem{proposition}[theorem]{Proposition}
\newtheorem{corollary}[theorem]{Corollary}
\newtheorem{remark}[theorem]{Remark}
\def\p{\mathcal{P}}
\DeclareMathOperator{\diam}{diam}
\begin{document}

\title{Asymptotic estimates on the von Neumann inequality for homogeneous polynomials}

\author{Daniel Galicer, Santiago Muro, Pablo Sevilla-Peris}

\address{DEPARTAMENTO DE MATEM\'{A}TICA - PAB I,
FACULTAD DE CS. EXACTAS Y NATURALES, UNIVERSIDAD DE BUENOS AIRES, (1428) BUENOS AIRES, ARGENTINA AND CONICET} \email{dgalicer@dm.uba.ar} \email{smuro@dm.uba.ar}

\address{INSTITUTO UNIVERSITARIO DE MATEM\'ATICA PURA Y APLICADA, UNIVERSITAT POLIT\`ECNICA DE DE VAL\`ENCIA, CMNO VERA S/N, 46022, VALENCIA, SPAIN}
\email{psevilla@mat.upv.es}

\keywords{Multivariable von Neumann inequality; Commuting contractions, unimodular homogeneous polynomials, Steiner systems}

\thanks{The first two named authors were supported by projects CONICET PIP 0624, PICT 2011-1456,
UBACyT 20020130300057BA, UBACyT20020130300052BA. The third named author was supported by project MTM2014-57838-C2-2-P}

\subjclass[2000]{Primary 47A13, 47A60, Secondary 28A78, 60G99, 46G25, 05B05}

\begin{abstract}
By the von Neumann inequality for homogeneous polynomials there exists a positive constant
$C_{k,q}(n)$ such that for every $k$-homogeneous polynomial $p$ in $n$ variables and every
$n$-tuple of commuting operators  $(T_1, \dots, T_n)$ with $\sum_{i=1}^{n} \Vert T_{i} \Vert^{q} \leq 1$ we have
\[ \|p(T_1, \dots, T_n)\|_{\mathcal L(\mathcal H)} \leq C_{k,q}(n) \; \sup\{ |p(z_1, \dots, z_n)| : \textstyle  \sum_{i=1}^{n} \vert z_{i} \vert^{q} \leq 1 \}\,.
\]
For fixed $k$ and $q$, we study the asymptotic growth of the smallest constant $C_{k,q}(n)$ as $n$ (the number of variables/operators) tends to infinity.
For $q = \infty$, we obtain the correct asymptotic behavior of this constant (answering a question posed by
Dixon in the seventies). For $2 \leq q < \infty$ we improve some lower bounds given by Mantero and Tonge, and
prove the asymptotic behavior up to a logarithmic factor. To achieve this we provide estimates of the norm of
homogeneous unimodular Steiner polynomials, i.e. polynomials such that the multi-indices corresponding to the nonzero
coefficients form partial Steiner systems.
\end{abstract}
\maketitle

\section{Introduction}
A classical inequality in operator theory, due to  von Neumann \cite{vNe51}, asserts that if $T$ is  a linear
contraction on a complex Hilbert space $\mathcal H$ (i.e., its operator norm is less than or equal to one)
then
$$
\|p(T)\|_{\mathcal L(\mathcal H)}\le  \sup\{|p(z)| : z\in \mathbb C, \, |z|\le1 \},
$$
for every polynomial $p$ in one (complex) variable.
Note  that, as a direct consequence of von Neumann's inequality, we can define a functional calculus on the
disk algebra. There are many other consequences of this important inequality in functional analysis; we refer
the reader to \cite[Chapter 1]{Pis01} and the references therein for a fuller treatment of this inequality and
its applications.

For some time, it was very natural to ask whether the von Neumann inequality could  be extended to
polynomials in two or more commuting contractions.
For polynomials in two contractions Ando \cite{And63}, using ``dilation theory'' (see \cite{SzN74}), provided a positive answer. However, in the mid seventies, Varopoulos \cite{Var74} showed that von Neumann's
inequality cannot be extended to three or more contractions. For this, he used the metric theory of tensor
products together with probabilistic tools to construct a polynomial and operators that violate the inequality.
The work of Varopoulos has since been simplified and extended by several authors
\cite{Ble79,CraDav75,Dix76,ManTon79,ManTon80}.

It is an open problem of great interest in operator theory (see \cite{Ble01,Pis01}) to determine whether
there exists a constant $K (n)$ that adjusts von Neumann's inequality. More precisely, it is unknown whether
or not for every $n$ there exists a constant $K(n)$ such that
\begin{equation} \label{Pisier problem}
\|p(T_1,\dots,T_n)\|_{\mathcal L(\mathcal
H)}\le K(n) \; \sup\{ |p(z_1, \dots, z_n)| : |z_i| \leq 1 \},
\end{equation}
for every polynomial $p$ in $n$ variables and every $n$-tuple $(T_1, \dots, T_n)$ of commuting contractions in
$\mathcal L(\mathcal{H})$.

Dixon in \cite{Dix76} gave lower estimates for the optimal $K(n)$ and showed that, if  such a constant
verifying \eqref{Pisier problem} exists, then it must grow faster than any power of $n$. He did  this by
considering the problem in the smaller
class of $k$-homogeneous polynomials.  More precisely, he studied the asymptotic behavior  (as $n$, the number
of variables/operators, tends to infinity) of the smallest constant $C_{k,\infty}(n)$ such that
\begin{equation}\label{Dixon problem}
\|p(T_1,\dots,T_n)\|_{\mathcal L(\mathcal
H)}\le C_{k,\infty}(n) \;  
\sup\{ |p(z_1, \dots, z_n)| : |z_i| \leq 1 \},
\end{equation}
for every $k$-homogeneous polynomial $p$ in  $n$ variables and every $n$-tuple of commuting contractions
$(T_1, \dots, T_n)$. In \cite[Theorem~1.2]{Dix76} he showed that
\begin{equation} \label{wilco}
n^{\frac{1}{2} \big[\frac{k-1}{2} \big]} \ll C_{k,\infty}(n) \ll n^{\frac{k-2}{2}} \,,
\end{equation}
where $[x]$ denotes the integer part of $x$. For the lower bound Dixon used probabilistic techniques (the
Kahane-Salem-Zygmund theorem) and combinatorial ideas (Steiner systems) along with an ingenious construction
of the operators and the Hilbert space involved. \\
This problem was taken up by Mantero and Tonge in  \cite{ManTon79}. Among other problems, for each $1
\leq q < \infty$ they consider $C_{k,q}(n)$, the smallest constant such that
\begin{equation}\label{Mantero Tonge problem}
\|p(T_1,\dots,T_n)\|_{\mathcal L(\mathcal H)}\le C_{k,q}(n) \;   \sup\{|p(z_1,\dots,z_n)| :
\sum_{j=1}^n|z_j|^q \le 1\} \, ,
\end{equation}
for every $k$-homogeneous polynomial $p$ in  $n$ variables and every $n$-tuple of commuting contractions
$(T_1, \dots, T_n)$ with  $\sum_{i=1}^n \|T_i\|_{\mathcal L(\mathcal{H})}^q \leq 1$. They give upper and lower
estimates for the growth of $C_{k,q}(n)$ \cite[Propositions~11 and 17]{ManTon79} (here $q'$ denotes the conjugate of $q$; see below):
\begin{gather}
n^{\frac{k-1}{q'} - \frac{1}{2}\big[ \frac{k}{2} \big] } \ll C_{k,q}(n)  \ll n^{\frac{k-2}{q'}} \text{ for } 1
\leq q \leq 2 \label{q1}, \\
n^{\frac{k}{2} - \frac{1}{2}\big(\big[ \frac{k}{2} \big] +1\big)} \ll C_{k,q}(n)  \ll n^{\frac{k-2}{2}} \text{ for } 2
\leq q < \infty. \label{q2}
\end{gather}
 It is worth noting that the upper bounds here hold for every $n$-tuple $(T_1, \dots, T_n)$ satisfying
$\sum_{i=1}^{n} \Vert T_{i} \Vert^{q} \leq 1$ (and even a weaker condition), not necessarily commuting. If we
do not ask the contractions to
commute, this bound is shown to be optimal in \cite[Proposition~15]{ManTon79}.\\

Based on the combinatorial methods from \cite{Dix76} (i.e., considering polynomials whose monomials are
determined by Steiner blocks) we change the construction of the Hilbert space and the operators  given
there to find the exact asymptotic growth of $C_{k,\infty}(n)$, answering a question that was
explicitly posed by Dixon.\\
On the other hand, by applying some probabilistic tools used by Bayart in \cite{Bay12}, we are able to control
the increments of a Rademacher process and in this way we in this way we manage to narrow the range in \eqref{q2}, showing that the exponent in the power of $n$ is indeed optimal. We collect this in our main result.

\begin{theorem}\label{main}
For $k\ge 3$ and $1\le q\le\infty$, let $C_{k,q}(n)$ be the smallest constant such that
$$
\|p(T_1,\dots,T_n)\|_{\mathcal L(\mathcal H)}\le C_{k,q}(n) \;   \sup\{|p(z_1,\dots,z_n)| : \|(z_j)_j\|_{q}\le
1\},
$$
for every $k$-homogeneous polynomial $p$ in  $n$ variables and every $n$-tuple of commuting contractions
$(T_1, \dots, T_n)$ with  $\sum_{i=1}^n \|T_i\|_{\mathcal L(\mathcal{H})}^q \leq 1$. Then
\begin{enumerate}
	\item \label{vn l_infty} $C_{k,\infty}(n) \sim n^{\frac{k-2}{2}}$
	
	\item \label{vn l_q} for  $2 \leq q < \infty$ we have $$\log^{-3/q}(n) \; n^{\frac{k-2}{2}}  \ll
C_{k,q}(n) \ll n^{\frac{k-2}{2}} \, .$$
	In particular, $n^{\frac{k-2}{2} - \varepsilon } \ll C_{k,q}(n) \ll n^{\frac{k-2}{2}}$ for every
$\varepsilon > 0$.
\end{enumerate}
\end{theorem}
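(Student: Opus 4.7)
The upper bound $C_{k,q}(n) \ll n^{(k-2)/2}$ in both parts is already available: for $q=\infty$ it is the upper half of \eqref{wilco}, and for $2\le q<\infty$ it is the upper half of \eqref{q2}. Both estimates hold without any commutativity hypothesis and so apply a fortiori to commuting contractions. The content of Theorem~\ref{main} therefore lies entirely in the matching lower bounds, for which my plan is, in both parts, to exhibit an extremal pair (polynomial, commuting $n$-tuple). The polynomial will be a unimodular \emph{Steiner polynomial}
$$
p(z) = \sum_{B\in\mathcal{S}} \epsilon_B\, z^B,
$$
where $\mathcal{S}$ is a partial Steiner system of $k$-subsets of $\{1,\dots,n\}$ with $m=|\mathcal{S}|$ blocks and the signs $\epsilon_B\in\{-1,+1\}$ are chosen either deterministically or via the probabilistic method. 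The defining sparseness of $\mathcal{S}$ forces distinct blocks $B, B'$ to share few variables, so that the monomials $z^B$ behave almost orthogonally; this is the property driving both estimates below.

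On the polynomial side, the task is to bound $\sup_{\|z\|_q\le 1}|p(z)|$ from above. For $q=\infty$, I would apply a Kahane--Salem--Zygmund type argument to the random signs $\epsilon_B$ and refine it using the incidence structure of $\mathcal{S}$, obtaining $\sup_{|z_i|\le 1}|p(z)| \ll m^{1/2}$ up to a small logarithmic factor (which is ultimately absorbed). For $2\le q<\infty$, the probabilistic tools of Bayart \cite{Bay12} are the right vehicle: a Dudley-style chaining inequality applied to the Rademacher process $z\mapsto \sum_B \epsilon_B z^B$ on the $\ell_q$-ball, with metric-entropy estimates that exploit Steiner sparsity. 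The resulting bound has the form
$$
\sup_{\|z\|_q\le 1}|p(z)| \ll m^{1/2}\,(\log n)^{3/q},
$$
and the exponent $3/q$ is precisely the one that appears in part (\ref{vn l_q}) of the statement.

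On the operator side, following Dixon's combinatorial framework \cite{Dix76} but replacing his Hilbert space and his operators, I would build a graded Fock-type space $\mathcal{H}$ whose layers are indexed by the admissible subsets attached to $\mathcal{S}$, with each $T_i$ acting as a partial creation-like operator that raises an index whenever the enlarged subset remains admissible. Steiner sparsity is precisely what allows the $T_i$ to be simultaneously well defined, mutually commuting and individually contractive; a suitable rescaling then produces $\sum_i\|T_i\|^q\le 1$ in the finite $q$ case. The improvement over Dixon comes from a more refined choice of the grading together with pairing $p(T)$ between two carefully chosen subspaces of $\mathcal{H}$, which should yield an amplification factor $n^{(k-2)/2}$ on top of the trivial $m^{1/2}$:
$$
\|p(T_1,\dots,T_n)\| \;\gg\; m^{1/2}\,n^{(k-2)/2}.
$$
Combining the two estimates gives, for $2\le q<\infty$,
$$
C_{k,q}(n) \;\ge\; \frac{\|p(T)\|}{\sup_{\|z\|_q\le 1}|p(z)|} \;\gg\; (\log n)^{-3/q}\,n^{(k-2)/2},
$$
and an analogous computation (with the tighter $q=\infty$ sup-norm bound) yields $C_{k,\infty}(n)\gg n^{(k-2)/2}$ in part (\ref{vn l_infty}).

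I expect the hard part to be the operator construction: one must produce commuting contractions that extract an amplification factor of $n^{(k-2)/2}$ from the Steiner incidences while respecting the individual contractivity (or the $\ell_q$ budget). This is exactly where Dixon's original construction is suboptimal, and the new Hilbert-space grading and the operator action have to be chosen with the Steiner combinatorics explicitly in mind. A secondary technical point is obtaining the sharp $(\log n)^{3/q}$ factor in the chaining step, which requires a pseudo-metric on $\mathcal{S}$ adapted jointly to the $\ell_q$-geometry and to Steiner sparsity.
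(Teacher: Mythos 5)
Your overall architecture coincides with the paper's: a Steiner unimodular polynomial, Kahane--Salem--Zygmund for the polydisc norm, a Pisier/Dudley chaining bound on the $\ell_q$-ball, and a new Dixon-style commuting construction whose operator norm is played off against the sup norm. But the quantitative targets you assign to the two halves are wrong, and your final exponents only come out right because two errors compensate. Concretely, the paper takes $\mathcal S$ to be a near-optimal $S_p(k-1,k,n)$ partial Steiner system (the parameter $t=k-1$, which you never fix, is essential both for contractivity of the operators and for the Lipschitz estimate in the chaining step), so $m=|\mathcal S|\gg n^{k-1}$; the operator construction gives $p(T_1,\dots,T_n)e=|\mathcal S|\,g$, hence $\|p(T)\|\ge m=m^{1/2}n^{(k-1)/2}$ up to constants --- an amplification of $n^{(k-1)/2}$ over $m^{1/2}$, not your $n^{(k-2)/2}$; and KSZ gives $\|p\|_{\mathcal P(^k\ell_\infty^n)}\ll (n\,m\log k)^{1/2}\approx m^{1/2}n^{1/2}$, not $m^{1/2}$ up to logarithms ($m^{1/2}$ is the $L^2(\mathbb T^n)$ floor and is not attained up to logs). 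Inserting the genuine KSZ bound into your two stated estimates yields only $n^{(k-3)/2}$ for $q=\infty$, so as written your argument does not close even there.

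For $2\le q<\infty$ the gap is worse. Your claim $\sup_{\|z\|_q\le1}|p(z)|\ll m^{1/2}(\log n)^{3/q}$ misses the point of the chaining argument, whose actual output (Theorem 2.6 of the paper) is $\|p\|_{\mathcal P(^k\ell_q^n)}\ll \log^{3/q}(n)\,n^{\frac k2\cdot\frac{q-2}{q}}$; for $q=2$ this is merely polylogarithmic, vastly smaller than $m^{1/2}\approx n^{(k-1)/2}$. And your operator claim $\|p(T)\|\gg m^{1/2}n^{(k-2)/2}$ for an $n$-tuple with $\sum_i\|T_i\|^q\le1$ is provably false: combined with the correct sup-norm bound it would contradict the upper estimate $C_{k,q}(n)\ll n^{(k-2)/2}$ that you yourself invoke (for $k=3$, $q=2$ it asserts $\|p(T)\|\gg n^{3/2}$, while the upper bound forces $\|p(T)\|\ll n^{1/2}\log^{3/2}n$). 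The correct accounting is: establish $\|p(T)\|\ge m$ for the unrescaled contractions, rescale $R_j=T_j/n^{1/q}$ to meet the $\ell_q$ budget so that $\|p(R)\|\ge m/n^{k/q}=n^{(k-2)/2}\cdot n^{\frac k2\cdot\frac{q-2}{q}}$ up to constants, and divide by $\log^{3/q}(n)\,n^{\frac k2\cdot\frac{q-2}{q}}$; only then does the factor $\log^{-3/q}(n)\,n^{(k-2)/2}$ emerge.
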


\noindent The proof of this result will be given in Section \ref{prueba}.

\section{Steiner unimodular polynomials}

The systematic study of norms of random homogeneous polynomials started with the Kahane-Salem-Zygmund theorem
\cite[Chapter 6]{Kah68}, which is found very useful in Fourier analysis. More recently,
applications of norms of random  polynomials with unimodular coefficients were found in complex and
functional analysis (see for example \cite{Boa00,DefGarMae04,CarDim07,Bay12}).

The philosophy in this problem and in many others of the same kind (e.g. to compute the Sidon constant for polynomials \cite{MauQue10,DefFreOrtOunSei11}) is to find polynomials which have ``big'' (or ``many'') coefficients, but whose
maximum modulus on the unit ball is ``small''.\\
In this section we are going to  relax the
number of terms appearing in the polynomials, by allowing them to have some zero coefficients. In this way we will find a
special class of tetrahedral unimodular polynomials having many terms, but keeping the maximum modulus quite  small. 

Let us first start with some notation and preliminaries. As usual we will denote $\ell_{q}^{n}$ for $\mathbb{C}^{n}$ with the norm $\Vert (z_{1} , \ldots , z_{n})
\Vert_{q} = \big( \sum_{i=1}^{n} \vert z_{i} \vert^{q} \big)^{1/q}$ if $1 \leq q < \infty$ and $\Vert (z_{1} ,
\ldots , z_{n}) \Vert_{\infty} =
\max_{i=1 , \ldots, n} \vert z_{i} \vert$ for $q=\infty$.\\
 A $k$-homogeneous polynomial in $n$ variables is a function $p : \mathbb{C}^{n} \to \mathbb{C}$ of the form
\[
p(z_{1} , \ldots , z_{n}) = \sum_{\substack{\alpha \in \mathbb{N}_{0}^{n} \\ \vert \alpha \vert =k}}
a_{\alpha} z_{1}^{\alpha_{1}} \cdots z_{n}^{\alpha_{n}}
= \sum_{\substack{J=(j_{1},\ldots , j_{k}) \\ 1 \leq j_{1} \leq \ldots \leq j_{k} \leq n}}
c_{J} z_{j_{1}} \cdots z_{j_{k}}\, ,
\]
 where $a_{\alpha} \in \mathbb{C}$ and $\vert \alpha \vert = \alpha_{1} + \cdots + \alpha_{n} $. Given $\alpha$ we have
 $a_{\alpha}=c_{J}$ where $J=(1, \stackrel{\alpha_{1}}{\ldots}, 1, \ldots , n, \stackrel{\alpha_{k}}{\ldots} ,n)$. We will write $z_{1}^{\alpha_{1}} \cdots z_{n}^{\alpha_{n}} = z^{\alpha}$ and $z_{j_{1}} \cdots z_{j_{k}}=z_{J}$. For $1 \leq q
\leq \infty$ we denote by $\mathcal{P} (^{k}\ell_{q}^{n} )$ the Banach space of all $k$-homogeneous
polynomials on $n$ variables with the norm
\[
\Vert p \Vert_{\mathcal{P} (^{k}\ell_{q}^{n} )} = \sup \{ \vert p(z_{1} , \ldots , z_{n}) \, \colon \, \Vert
(z_{1} , \ldots , z_{n}) \Vert_{q} \leq 1  \} \,.
\]
 It is a well known fact (see e.g. \cite[Chapter~1]{Din99}) that for every $k$-homogeneous polynomial there is
a unique symmetric $k$-linear form $L$ on $\mathbb{C}^{n}$ such that $p(z) = L(z, \ldots , z)$ for all $z \in
\mathbb{C}^{n}$. Also for each $1 \leq q \leq \infty$ and $k \geq 2$ there exists a constant $\lambda(k,q)>0$ such that
\begin{equation} \label{polarizacion}
\Vert p \Vert_{\mathcal{P} (^{k}\ell_{q}^{n} )} \leq \sup \{ L(z^{(1)} , \ldots , z^{(k)}) \colon \Vert
z^{(j)} \Vert_{q} \leq 1 \, , \, j=1, \ldots , k  \} \leq \lambda(k,q) \Vert p \Vert_{\mathcal{P}
(^{k}\ell_{q}^{n} )} \, .
\end{equation}
In general $\lambda(k,q) \leq \frac{k^{k}}{k!}$ but improvements in concrete cases include $\lambda(k,2)=1$ and $\lambda(k,\infty) \leq \frac{k^{\frac{k}{2}} (k+1)^{\frac{k+1}{2}}}{2^{k} k!}$ (see \cite[Propositions~1.44,~1.43]{Din99}).\\
 If $(a_{n})_{n}$ and $(b_{n})_{n}$ are two sequences of real numbers we will write $a_{n} \ll b_{n}$ if there
exists a constant $C>0$ (independent of $n$) such that $a_{n} \leq C b_{n}$ for every $n$. We will write
$a_{n} \sim b_{n}$ if $a_{n} \ll b_{n}$ and $b_{n} \ll a_{n}$.\\
Given a set $A$ we will denote its cardinality by $\vert A \vert$. \\
For an index $1 < q < \infty$ we denote by $q'$ its conjugate: $1 = \frac{1}{q} + \frac{1}{q'}$.

\smallskip

Let $\mathcal C\subset \mathbb N_0^n$ denote any set of multi-indices $\alpha$ with $|\alpha|=k$. Then as a consequence of the
Kahane-Salem-Zygmund theorem \cite[Chapter~6]{Kah68} there exists a $k$-homogeneous polynomial, with
unimodular coefficients $a_\alpha$ for $\alpha\in\mathcal C$ and $a_\alpha=0$ if $\alpha\notin\mathcal C$,
of small maximum modulus on the $n$-polydisk. More precisely, let $(\varepsilon_\alpha)_{\alpha \in \mathcal C}$
be independent Bernoulli variables on a probability space $(\Omega,\Sigma, \mathbb{P})$,
then we have
\begin{equation} \label{prob ellinfty}
\mathbb{P} \{  \omega \in \Omega : \| \sum_{\alpha \in \mathcal C} \varepsilon_\alpha (\omega) z^\alpha
\|_{\p(^k
\ell_\infty^n)} \geq D \big( n \log(k) |\mathcal C| \big)^{1/2} \}  \leq \frac{1}{k^2
e^n },
\end{equation}
where $D >0$ is an absolute constant which is less than 8.
In particular there are signs $(a_\alpha)_{\alpha \in \mathcal C}$ such that the $k$-homogeneous
unimodular polynomial
$$
p(z)= \sum_{\alpha\in\mathcal C}a_\alpha z^\alpha \,,
$$  satisfies
\begin{equation}\label{KSZ}
\|p\|_{\p(^k\ell_\infty^n)}\le D\big( n \log(k) |\mathcal C| \big)^{1/2} \, .
\end{equation}
We are going to work with polynomials with many zero coefficients, expecting that this will make the norm of the polynomial small enough. The presence of  $|\mathcal{C}|$ in  \eqref{KSZ} is sufficient for our needs
when the norm of the polynomial is computed in $\ell_\infty^n$ but not when we consider the norm in  $\ell_p^n$ and then we need different tools. The relevant results we have to hand \cite{Boa00,DefGarMae03,DefGarMae04,Bay12} do not take into account the number of non-zero coefficients, so considering our tetrahedral polynomials does not improve these estimates. We deal with polynomials with a particular combinatorial configuration in order to get useful estimates for our purposes. We modify some arguments from \cite{Bay12}, reflecting this configuration.

To achieve our goal we consider special subsets of multi-indices: partial Steiner systems
on the set $\{1,\dots,n\}$. An $S_p(t, k, n)$ \textit{partial Steiner system} is a collection of subsets of
size $k$ of
$\{1,\dots,n\}$ such that every subset of $t$ elements is contained in at most one member of the
collection of subsets of size $k$.
\begin{definition}\rm
 A $k$-homogeneous polynomial of $n$ variables,  is a \textit{Steiner unimodular polynomial } if
there exists an $S_p(t, k, n)$ partial Steiner system $\mathcal S$ such that
$p(z_{1} , \ldots , z_{n}) = \sum_{J \in \mathcal S} c_{J} z_{J}$ and $c_J=\pm1$. 
\end{definition}
Observe that our Steiner unimodular polynomials are tetrahedral, i.e. in every term $z_{J}$ each variable $z_{j_{0}}$ appears at most once. In other words, no term in the polynomial contains a factor of degree $2$ or
higher in any of the variables $z_{1}, \ldots, z_{n}$.\\
The first one to consider Steiner unimodular polynomials was Dixon \cite{Dix76}, who used $S_{p}([(k-1)/2],k,n)$ partial Steiner systems. He used this to obtain lower bounds for \eqref{Dixon problem}. The combinatorial property was only applied to define some Hilbert space operators that violate the inequality, but not to estimate the norm of the polynomial, which he did using \eqref{KSZ} and the number of non-zero coefficients.

In the following lemmas, in $\ell_q^n$, $1 \leq q<\infty$, we will
strongly use the fact that the multi-indices of the non-zero coefficients form a partial Steiner system to
estimate the maximum modulus.
We use an entropy argument due to Pisier to control the
increments of a Rademacher process and subsequently apply an interpolation argument.

\medskip
Let us first recall some definitions and a result on regularity of random process. A complete account on these can
be found in \cite[Chapters~4 and 11]{LedTal91}. \\
A Young function $\psi$ is a convex increasing function defined on $[0, \infty[$ such that $\lim_{t\to
\infty}\psi(t)=\infty$ and $\psi(0)=0$. For a probability space $( \Omega, \Sigma, \mathbb{P})$, the  Orlicz
space $L_\psi = L_\psi ( \Omega, \Sigma, \mathbb{P})$  is defined as the space of all real-valued random
variables $Z$ for which there exists $c>0$ such that $\mathbb{E}(\psi(|Z|/c)) < \infty$. It is a Banach space
with the norm $\|Z\|_{L_\psi} = \inf \{ c >0 : \mathbb{E}(\psi(|Z|/c)) \leq 1 \}$.\\
Let $(X,d)$ be a metric space. Given $\varepsilon >0$, the entropy number $N(X,d;\varepsilon)$ is defined as
the smallest number of open balls of radius $\varepsilon$ in the metric $d$, which form a covering of the
metric space $X$.\\
With this, the entropy integral of $(X,d)$ with respect to $\psi$ is given by
\[
J_\psi(X,d) : = \int_0^{\diam(X)} \psi^{-1}(N(X,d;\varepsilon)) d\varepsilon \, .
\]
\medskip

\noindent We are going to define a random process $(Y_{z})_{z \in B_{\ell_{2}^{n}}}$ and we will need
to estimate the expectation of $\sup_{z} Y_{z}$. To do so, we use the following theorem due to
Pisier \cite{Pis83} (see also \cite[Theorem~11.1]{LedTal91}) that bounds this expectation with the entropy
integral, provided that the random process satisfies a certain contraction condition.

\begin{theorem}\label{Pisier's result} Let $Z= (Z_x)_{x \in X}$ be a random process indexed by $(X,d)$ in
$L_\psi$ such that, for every $x,x' \in X$,
\[
\|Z_x - Z_{x'}\|_{L_\psi} \leq d(x,x') \,.
\]
Then, if $J_\psi(X,d)$ is finite, $Z$ is almost surely bounded and
\[
\mathbb{E} \big( \sup_{x,x'\in X} |Z_x - Z_{x'}| \big) \leq 8 J_\psi(X,d) \, .
\]
\end{theorem}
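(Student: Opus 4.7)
The plan is a standard dyadic chaining argument. Set $D=\diam(X)$ and, for each integer $n\ge 0$, let $T_n\subseteq X$ be a minimal $D2^{-n}$-net, so that $|T_n|=N(X,d;D2^{-n})=:N_n$; take $T_0=\{x_0\}$ and choose projections $\pi_n\colon X\to T_n$ with $d(x,\pi_n(x))\le D2^{-n}$. By composing projections from fine to coarse, I arrange that $\pi_n(x)=\pi_n(\pi_{n+1}(x))$, so that as $x$ varies the pair $(\pi_n(x),\pi_{n+1}(x))$ takes at most $N_{n+1}$ distinct values.

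The probabilistic ingredient is the elementary Orlicz maximal inequality: for $V_1,\dots,V_M\in L_\psi$ with $\|V_i\|_{L_\psi}\le c$, convexity of $\psi$ combined with Jensen's inequality (applied to $\psi(\max_i|V_i|/c)\le \sum_i\psi(|V_i|/c)$) yields $\mathbb{E}\max_i|V_i|\le c\,\psi^{-1}(M)$. The contraction hypothesis together with the triangle inequality gives $\|Z_{\pi_{n+1}(x)}-Z_{\pi_n(x)}\|_{L_\psi}\le 2D2^{-n}$ for every $x$, so applying the maximal inequality to the at most $N_{n+1}$ distinct increments produces
\[
\mathbb{E}\sup_{x\in X}\bigl|Z_{\pi_{n+1}(x)}-Z_{\pi_n(x)}\bigr|\le 2D2^{-n}\,\psi^{-1}(N_{n+1}).
\]

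I then chain via the telescoping identity $Z_x-Z_{x_0}=\sum_{n\ge 0}(Z_{\pi_{n+1}(x)}-Z_{\pi_n(x)})$, whose almost sure convergence (and hence the almost sure boundedness of $Z$) follows a posteriori from finiteness of the resulting majorant. Since $\varepsilon\mapsto\psi^{-1}(N(X,d;\varepsilon))$ is nonincreasing, each term $D2^{-n-2}\psi^{-1}(N_{n+1})$ is bounded by $\int_{D2^{-n-2}}^{D2^{-n-1}}\psi^{-1}(N(X,d;\varepsilon))\,d\varepsilon$; these intervals tile $(0,D/2]$, and summing the resulting comparisons gives $\sum_{n\ge 0}2D2^{-n}\psi^{-1}(N_{n+1})\le 8\,J_\psi(X,d)$, i.e.\ $\mathbb{E}\sup_x|Z_x-Z_{x_0}|\le 8\,J_\psi(X,d)$. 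To transfer this to $\sup_{x,x'}|Z_x-Z_{x'}|$ without losing a factor of two, I run the chain on pairs $(x,x')\in X\times X$ simultaneously, which avoids the crude reduction $\sup_{x,x'}|Z_x-Z_{x'}|\le 2\sup_x|Z_x-Z_{x_0}|$ and recovers the stated constant.

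The main obstacle I foresee is precisely the sharpness of the constant $8$: the naive anchor-based reduction gives only $16\,J_\psi$, so recovering the advertised $8$ requires the paired chain and a tight matching of dyadic radii with the integration intervals, and both devices have to be reconciled with a single choice of projections $\pi_n$. A secondary point, easily handled, is measurability of the suprema, which I would treat by restricting to a countable dense subset of $X$ and passing to a limit.
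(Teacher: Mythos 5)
The paper does not prove this statement: it is quoted as a known result of Pisier, with the reference \cite[Theorem~11.1]{LedTal91}, so there is no internal proof to compare against. Your dyadic chaining reconstruction is the standard argument and is essentially sound: the Orlicz maximal inequality $\mathbb{E}\max_i|V_i|\le c\,\psi^{-1}(M)$ is correct (and finiteness of $J_\psi$ forces all covering numbers to be finite, so the nets exist), the telescoping identity and the comparison of the dyadic sum with the entropy integral are carried out correctly, and the measurability/convergence caveats you flag are handled exactly as you say.

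The one genuine gap is the final step, where you claim to recover the constant $8$ for $\sup_{x,x'}|Z_x-Z_{x'}|$ by ``running the chain on pairs.'' As stated this does not work for a general Young function: chaining on $X\times X$ replaces the covering numbers $N$ by roughly $N^2$, and convexity of $\psi$ only gives $\psi(2t)\ge 2\psi(t)$, so the inequality $\psi^{-1}(N^2)\le 2\,\psi^{-1}(N)$ that the device implicitly needs can fail badly (already for $\psi(t)=t$ one has $\psi^{-1}(N^2)=N\,\psi^{-1}(N)$). The fix is simpler and is already sitting in your construction: once you impose $\pi_n(x)=\pi_n(\pi_{n+1}(x))$, the increment satisfies $\|Z_{\pi_{n+1}(x)}-Z_{\pi_n(x)}\|_{L_\psi}\le d\bigl(\pi_{n+1}(x),\pi_n(\pi_{n+1}(x))\bigr)\le D2^{-n}$ directly (no triangle inequality, no factor $2$), since $\pi_n$ moves \emph{every} point by at most $D2^{-n}$. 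With this sharper bound your own bookkeeping yields $\mathbb{E}\sup_x|Z_x-Z_{x_0}|\le 4J_\psi(X,d)$, and the ``crude'' reduction $\sup_{x,x'}|Z_x-Z_{x'}|\le 2\sup_x|Z_x-Z_{x_0}|$ then gives exactly the advertised $8J_\psi(X,d)$. (For the paper's application the precise constant is in any case immaterial, since it is absorbed into the $\ll$ notation.)
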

Let now $k \geq 2$ and let $\mathcal S$ be a $S_p(k-1,k,n)$ partial Steiner system. 
We consider  a family of independent Bernoulli variables $(\varepsilon_J)_{J\in\mathcal S}$ on the probability space
$(\Omega, \Sigma,\mathbb{P})$. For $z \in B_{\ell_2^n}$ we define the following Rademacher process indexed by
$B_{\ell_2^n}$ as
\begin{equation} \label{benvolgut}
Y_z=\frac{1}{k}\sum_{J\in\mathcal S}\varepsilon_J z_{J}.
\end{equation}
We view it as a random process in the Orlicz space
defined by the Young function $\psi_{2}(t)  =  e^{t^2} -1$.
\begin{lemma}\label{Rademacher process}
The Rademacher process defined in \eqref{benvolgut} fulfils the following Lipschitz condition: 
$$
\|Y_z - Y_{z'}\|_{L_{\psi_2}} \leq C  \| z - z' \|_{\infty},
$$
for some universal constant $C\geq 1$ and every $z,z' \in B_{\ell_2^n}$.
\end{lemma}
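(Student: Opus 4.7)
The plan is to bound the $L_{\psi_2}$ norm of the Rademacher sum $Y_z - Y_{z'} = \frac{1}{k}\sum_{J\in\mathcal S}\varepsilon_J(z_J - z'_J)$ by the $\ell_2$ norm of its coefficients via the classical subgaussian estimate, and then to show that this $\ell_2$ norm is controlled by $\|z-z'\|_\infty$ using the Steiner property combined with an elementary-symmetric-function inequality. Recall that for any (real or complex) scalars $(a_J)_{J\in\mathcal S}$ the standard Khintchine--Kahane estimate gives
\[
\Bigl\Vert \sum_{J\in\mathcal S}\varepsilon_J a_J\Bigr\Vert_{L_{\psi_2}} \leq C_0\Bigl(\sum_{J\in\mathcal S}|a_J|^2\Bigr)^{1/2}
\]
for a universal constant $C_0$, so the whole job reduces to estimating $\sum_{J\in\mathcal S}|z_J-z'_J|^2$.

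Set $w:=z-z'$ and use the telescoping identity
\[
z_J - z'_J = \sum_{i=1}^{k} w_{j_i}\, z_{j_1}\cdots z_{j_{i-1}}\, z'_{j_{i+1}}\cdots z'_{j_k}.
\]
An application of Cauchy--Schwarz in the $i$-sum, together with the bound $|w_{j_i}|\leq\|w\|_\infty$, yields
\[
|z_J - z'_J|^2 \leq k\,\|w\|_\infty^2 \sum_{i=1}^k \prod_{l<i}|z_{j_l}|^2\prod_{l>i}|z'_{j_l}|^2.
\]
Introducing $u_j^2:=|z_j|^2+|z'_j|^2$ (so $\sum_j u_j^2 = \|z\|_2^2+\|z'\|_2^2\leq 2$) dominates every factor by $u_{j_l}^2$, which removes the asymmetry between $z$ and $z'$.

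Now the Steiner condition enters decisively. Writing $\sum_{J\in\mathcal S}\sum_{i=1}^k \prod_{l\neq i}u_{j_l}^2$ as a sum over the $(k-1)$-subsets $A=J\setminus\{j_i\}$, the $S_p(k-1,k,n)$ hypothesis says that each such $A$ is contained in \emph{at most one} block $J\in\mathcal S$. Hence
\[
\sum_{J\in\mathcal S}\sum_{i=1}^k \prod_{l\neq i}u_{j_l}^2 \leq \sum_{\substack{A\subset\{1,\dots,n\}\\ |A|=k-1}} \prod_{j\in A} u_j^2 = e_{k-1}(u_1^2,\dots,u_n^2).
\]
Applying Maclaurin's inequality $e_{k-1}(x)\leq (\sum_j x_j)^{k-1}/(k-1)!$ for $x_j\geq 0$ gives a bound by $2^{k-1}/(k-1)!$. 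Putting everything together,
\[
\frac{1}{k^2}\sum_{J\in\mathcal S}|z_J - z'_J|^2 \leq \frac{2^{k-1}}{k!}\,\|w\|_\infty^2,
\]
and combining with Khintchine--Kahane produces $\|Y_z-Y_{z'}\|_{L_{\psi_2}} \leq C_0(2^{k-1}/k!)^{1/2}\|z-z'\|_\infty$; since $(2^{k-1}/k!)^{1/2}$ is uniformly bounded in $k$, replacing the prefactor with $\max\bigl(1,C_0\sup_k (2^{k-1}/k!)^{1/2}\bigr)$ yields a universal $C\geq 1$.

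The main technical obstacle is the last paragraph: one must recognize that, although the naive bound on $\sum_J \prod_{l\neq i} u_{j_l}^2$ for a single index $i$ can be large, summing over all removal positions $i$ converts the expression into a sum over $(k-1)$-subsets where the Steiner partial-packing condition precisely gives a multiplicity bound of one, reducing the estimate to an elementary symmetric polynomial that admits the clean Maclaurin bound. Without this symmetrization step the Steiner hypothesis cannot be applied directly.
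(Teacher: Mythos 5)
Your proof is correct and follows essentially the same route as the paper: comparison of the $\psi_2$- and $L_2$-norms via Khintchine, the telescoping decomposition of $z_J-z'_J$, and the observation that the $S_p(k-1,k,n)$ property lets each $(k-1)$-tuple of indices determine at most one block of $\mathcal S$. The only (cosmetic) difference is in the bookkeeping: the paper distributes the $k$ telescoping terms via Minkowski's inequality in $\ell_2(\mathcal S)$ and bounds each resulting sum directly by $\prod_{l}\big(\sum_{j}|z_j|^2\big)\le 1$, whereas you use Cauchy--Schwarz, symmetrize with $u_j^2=|z_j|^2+|z'_j|^2$, and finish with a Maclaurin bound on $e_{k-1}$.
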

\begin{proof}
As a consequence of Khintchine inequalities (see e.g. \cite[Sect~8.5]{DefFlo93}), the $\psi_2$-norm of a
Rademacher process is comparable to its $L_2$-norm. Now,
\begin{align*}
\|Y_z - Y_{z'}\|_{L_2} & =  \frac{1}{k} \big( \int_{\Omega} \big|\sum_{J\in\mathcal S}\varepsilon_J(\omega)
(z_J - {z}_J') \big|^2 d\mathbb{P}(\omega)\big)^{1/2}
 = \frac{1}{k} \big( \sum_{J\in\mathcal S}\  \big|z_J - z_J' \big|^2 \big)^{1/2} \\
& = \frac{1}{k} \big( \sum_{J\in\mathcal S}\  \big| \sum_{u=1}^k z_{j_1} \dots z_{j_{u-1}} (z_{j_u} -
z_{j_u}') z_{j_{u+1}}' \dots  z_{j_{k}}' \big|^2 )^{1/2} \\
& \leq \frac{1}{k} \sum_{u=1}^k \big( \sum_{J\in\mathcal S}\  \big|  z_{j_1} \dots z_{j_{u-1}} (z_{j_u} -
z_{j_u}') z_{j_{u+1}}' \dots  z_{j_{k}}' \big|^2 \big)^{1/2} \\
& \leq \frac{1}{k} \sum_{u=1}^k  \| z - z' \|_{\infty} \big( \sum_{J\in\mathcal S}\  \big|  z_{j_1}
 \dots z_{j_{u-1}} z_{j_{u+1}}' \dots  z_{j_{k}}' \big|^2 \big)^{1/2}
\end{align*}
Since $\mathcal S$ is an $S_p(k-1,k,n)$ partial Steiner system, given $j_1, \dots,
j_{u-1},j_{u+1},\dots,j_k$ for a fixed $u$, there is at most one index $j_u$ such that $(j_1, \dots, j_{k})$
belongs to
$\mathcal S$. Therefore the sum $\sum_{J\in\mathcal S}\  \big|  z_{j_1} \dots
z_{j_{u-1}} z_{j_{u+1}}' \dots  z_{j_{k}}' \big|^2$ can be bounded by 
$$
(\sum_{l_1=1}^n
|z_{l_1}|^2 ) \cdots (\sum_{l_{u-1}=1}^n  |z_{l_{u-1}}|^2) (\sum_{l_{u+1}=1}^n  |z_{l_{u+1}}'|^2 ) \cdots
(\sum_{l_{k}=1}^n |z_{l_{k}}'|^2 ) ,
$$ 
and this is less than or equal to one (since $z, z' \in B_{\ell_2^n}$).
This combined with the previous inequality concludes the proof.
\end{proof}
We are now in a position to use Theorem~\ref{Pisier's result} with $L_{\psi_{2}}$, $X=B_{\ell_{2}^{n}}$ and
$d=\Vert \cdot\Vert_{\infty}$ to bound the expectation of the supremum. For this, we estimate the entropy
integral
 $J_{\psi_{2}} (B_{\ell_2^n},\|~\cdot~\|_\infty)$. Note that $\psi_{2}^{-1}(t)=\log^{1/2} (t+1)$; we use
instead $\log^{1/2} (t)$, which does not change the computation of the integral. We estimate the integral in the
following result, which is a version of
 \cite[Lemma~2.1]{Bay12}; the proof is essentially the same and we include it here for the sake of
completeness.

\begin{lemma}\label{entropy control}
There exists $C>0$ such that for every $n \geq 2$ we have
$$J_{\psi_2}(B_{\ell_2^n},\|~\cdot~\|_\infty) \leq C \log^{3/2}(n) \,.$$
\end{lemma}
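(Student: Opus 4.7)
The plan is to bound the covering numbers $N(B_{\ell_2^n}, \|\cdot\|_\infty; \varepsilon)$ in two complementary regimes and then split the entropy integral. Recall $\diam(B_{\ell_2^n})$ in the $\|\cdot\|_\infty$-metric equals $2$, and (as noted in the excerpt) $\psi_2^{-1}(t) \sim \log^{1/2}(t)$, so up to an absolute constant the integrand is $\sqrt{\log N(B_{\ell_2^n}, \|\cdot\|_\infty; \varepsilon)}$.

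For small $\varepsilon$, I would use a volume/inclusion argument: since $B_{\ell_2^n} \subset B_{\ell_\infty^n}$, a standard $\varepsilon$-net of the cube gives $N(B_{\ell_2^n}, \|\cdot\|_\infty; \varepsilon) \leq (1 + 2/\varepsilon)^n$, hence $\sqrt{\log N} \ll \sqrt{n \log(3/\varepsilon)}$. For larger $\varepsilon$, I would invoke the dual Sudakov minoration (see Ledoux--Talagrand): there is an absolute constant $C$ with
\[
\varepsilon \sqrt{\log N(B_{\ell_2^n}, \|\cdot\|_\infty; \varepsilon)} \leq C\, \mathbb{E}\Bigl\|\sum_{i=1}^n g_i e_i\Bigr\|_{\ell_\infty^n},
\]
where the $g_i$ are independent standard Gaussians. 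Since the expected maximum of $n$ standard Gaussians is of order $\sqrt{\log n}$, this yields $\sqrt{\log N} \ll \sqrt{\log n}/\varepsilon$.

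Next I would split $J_{\psi_2}(B_{\ell_2^n}, \|\cdot\|_\infty)$ at the threshold $\varepsilon_0 = 1/\sqrt{n}$, where the two bounds are comparable. On $[0, \varepsilon_0]$ the volume bound gives
\[
\int_0^{1/\sqrt{n}} \sqrt{n \log(3/\varepsilon)}\, d\varepsilon \ll \sqrt{\log n},
\]
which one checks by the substitution $t = \log(3/\varepsilon)$, converting it into a Gaussian-tail integral $\int_{\log(3\sqrt n)}^\infty \sqrt{t}\, e^{-t}\, dt$. On $[\varepsilon_0, 2]$ the dual Sudakov bound gives
\[
\int_{1/\sqrt{n}}^2 \frac{\sqrt{\log n}}{\varepsilon}\, d\varepsilon \;=\; \sqrt{\log n}\, \log(2\sqrt{n}) \ll \log^{3/2}(n).
\]
Adding the two pieces, the second dominates and yields the desired bound $C \log^{3/2}(n)$.

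The main obstacle is producing the sharp dual Sudakov inequality, since this is exactly where the $\sqrt{\log n}$ factor (and ultimately the exponent $3/2$) enters; the naive use of the volume estimate throughout would give a bound polynomial in $n$, which is useless. Once the dual Sudakov step is in place, the remainder is an elementary change of variables and the balancing of the two regimes at $\varepsilon_0 = 1/\sqrt{n}$.
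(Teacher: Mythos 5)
Your proof is correct, but it follows a genuinely different route from the paper. The paper invokes Sch\"utt's theorem on the entropy numbers $e_m$ of the formal identity $\ell_2^n\to\ell_\infty^n$, which gives a three-regime bound on $e_m$ ($m\le\log n$, $\log n\le m\le 2n$, $m\ge 2n$), and then evaluates the entropy integral piece by piece, the middle regime requiring a somewhat delicate Abel-summation/telescoping computation (this is essentially Bayart's Lemma~2.1). You instead combine the volumetric bound $N(B_{\ell_2^n},\|\cdot\|_\infty;\varepsilon)\le (1+C/\varepsilon)^{2n}$ for small $\varepsilon$ with the dual Sudakov minoration $\varepsilon\sqrt{\log N(B_{\ell_2^n},\|\cdot\|_\infty;\varepsilon)}\ll \mathbb{E}\max_i|g_i|\sim\sqrt{\log n}$ for large $\varepsilon$, splitting at $\varepsilon_0=1/\sqrt n$ where the two bounds match; both pieces check out (the first is $O(\sqrt{\log n})$ by the Gaussian-tail substitution, the second is $\sqrt{\log n}\,\log(2\sqrt n)\ll\log^{3/2}n$). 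Your approach is shorter and stays entirely within standard tools from Ledoux--Talagrand, which the paper already cites; Sch\"utt's estimates are pointwise sharper (e.g.\ for $m$ near $2n$ they give $e_m\sim m^{-1/2}$ rather than $(\log n/m)^{1/2}$), but that extra precision is not needed to reach $\log^{3/2}(n)$. One small point you should make explicit, as the paper does for Sch\"utt's theorem: $B_{\ell_2^n}$ is the \emph{complex} ball, so the volumetric and Gaussian estimates are applied in $\mathbb{R}^{2n}$ with the norm $\max_i(x_i^2+y_i^2)^{1/2}$; this only changes absolute constants.
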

\begin{proof}
We fix $n$ and for each $m$ we consider the number
$$e_m = \inf \{ \sigma > 0 : B_{\ell_2^n} \subset \bigcup_{i=1}^{2^m}x_i + \sigma B_{\ell_\infty^n}\}\,.$$
By result of Sch\"utt \cite[Theorem~1]{Sch84} there exists a constant $K$, independent of $n$ and $m$, such
that
\begin{equation}\label{Sch84}
e_m \leq K \times \left\{\begin{array}{cl}
1 & \mbox{if } m \leq \log(n),\\
\big(\frac{\log(1 + \frac{2n}{m})}{m} \big)^{1/2} & \mbox{if } \log(n) \leq m \leq 2n, \\
2^{-\frac{m}{2n}} n^{-\frac{1}{2}} & \mbox{if } m \geq 2n \, .
\end{array}
\right.
\end{equation}
Let us note that Sch\"utt's result is stated for real spaces. Since the $(2n)$-dimensional real euclidean
space is isometrically isomorphic to $\ell_{2}^{n}$ we get \eqref{Sch84}.\\
For $m\geq 2n$, if $K \, 2^{-\frac{m+1}{2n}} n^{-\frac{1}{2}} \leq \varepsilon < K 2^{-\frac{m}{2n}}
n^{-\frac{1}{2}}$, then by \eqref{Sch84} we have $N(B_{\ell_2^n},\|~\cdot~\|_\infty;\varepsilon)
\leq 2^{m+1} \leq 2 \frac{K^{2n}}{\varepsilon^{2n} n^{n}}$ and
\[
\int_{0}^{K/(2 \sqrt{n})}  \log^{1/2}\big( N(B_{\ell_2^n},\|~\cdot~\|_\infty;\varepsilon) \big) d
\varepsilon
\leq \int_0^{K/(2 \sqrt{n})} n^{1/2} \log^{1/2} \big( \frac{2K^2}{\varepsilon^2 n} \big) d\varepsilon
 = \int_0^{K/2} \log^{1/2} \big( \frac{2C^2}{u^2 } \big) du =K_{1}< \infty \,.
\]
With the same argument, if $\frac{K}{2 \sqrt{n}} \leq \varepsilon K \sqrt{\frac{\log 2}{2n}}$ then
$N(B_{\ell_2^n},\|~\cdot~\|_\infty;\varepsilon) \leq 2^{2n}$ and with this we can bound the integral from
$\frac{K}{2 \sqrt{n}}$ to $K \sqrt{\frac{\log 2}{2n}}$ by some $K_{2}$.\\
We define now $\varepsilon_{m} = \big(\frac{\log(1 + \frac{2n}{m})}{m} \big)^{1/2}$ for $[\log n] \leq m< 2n$.
Again by \eqref{Sch84}, if $\varepsilon_{m+1} \leq \varepsilon < \varepsilon_{m}$ then
$N(B_{\ell_2^n},\|~\cdot~\|_\infty;\varepsilon) \leq 2^{m+1}$. Then
\[
\int_{\varepsilon_{2n}}^{\varepsilon_{[\log(n)]}} \log^{1/2} \big(
N(B_{\ell_2^n},\|~\cdot~\|_\infty;\varepsilon) \big) d\varepsilon \leq \sum_{m=[\log(n)]}^{2n-1} (m +1)^{1/2}
(\varepsilon_m - \varepsilon_{m+1}) \log^{1/2}(2)\,.
\]
We write
\[
(\varepsilon_m - \varepsilon_{m+1})  =  K\bigg[ \frac{\log^{1/2}(1 + \frac{2n}{m})}{m^{1/2}} -
\frac{\log^{1/2}(1 + \frac{2n}{m})}{(m+1)^{1/2}} + \frac{\log^{1/2}(1 + \frac{2n}{m})}{(m+1)^{1/2}}
 - \frac{\log^{1/2}(1 + \frac{2n}{m+1})}{(m+1)^{1/2}} \bigg]
\]
and we get
\begin{multline*}
\int_{\varepsilon_{2n}}^{\varepsilon_{[\log(n)]}} \log^{1/2} \big(
N(B_{\ell_2^n},\|~\cdot~\|_\infty;\varepsilon) \big) d\varepsilon  \\
\leq   K \bigg( \log^{1/2}(n)  \sum_{s=[\log(n)]}^{2s-1} \frac{(s +1)^{1/2}}{s^{3/2}}
 + \sum_{s=[\log(n)]}^{2s-1} \log^{1/2}(1 + \frac{2n}{s}) - \log^{1/2}(1 + \frac{2n}{s+1})  \bigg)
  \leq K_{3} \log^{3/2}(n).
\end{multline*}
Finally, for the remaining subinterval we have that, by  \eqref{Sch84}, if $\varepsilon \geq
\varepsilon_{[\log(n)]}$, then $N(B_{\ell_2^n},\|~\cdot~\|_\infty;\varepsilon) \leq 2^{\log(n)}$. Hence
\[
\int_{\varepsilon_{[\log(n)]}}^1 \log^{1/2} \big( N(B_{\ell_2^n},\|~\cdot~\|_\infty;\varepsilon) \big)
d\varepsilon \leq  K_{4} \int_0^1 \log^{1/2}(n) d\varepsilon \leq  K_{4} \log^{1/2}(n) \, .
\]
This completes the proof.
\end{proof}

We can now find Steiner unimodular polynomials that have small norm in $\p(^k\ell_q^n)$, for every  $2 \leq q \le\infty$ simultaneously.
\begin{theorem}\label{Steiner con norma chica}
Let $k\ge 2$  and $\mathcal S$ be an $S_p(k-1,k,n)$ partial Steiner system. Then there exist signs
$(c_J)_{J\in
\mathcal S}$ and a constant $A_{k,q}>0$ independent of $n$  such that the
$k$-homogeneous polynomial $p = \sum_{J\in\mathcal S} c_J z_J$ satisfies
\[
\|p\|_{\p(^k\ell_q^n)} \leq A_{k,q} \times \begin{cases} \log^{\frac{3}{q}}(n) n^{\frac{k}{2} (\frac{q-2}{q})} & \text{ for } 2 \leq q <\infty \\
\log^{\frac{3q-3}{q}}(n) & \text{ for } 1\le q\le 2 \,.
\end{cases}
\]
Moreover, the constant $A_{k,q}$ may be taken independent of $k$ for $q \neq 2$.
\end{theorem}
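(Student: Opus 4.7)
The plan is to produce the required polynomial via a probabilistic existence argument applied to the random tetrahedral polynomial $p_\omega(z) = \sum_{J \in \mathcal S}\varepsilon_J(\omega)\,z_J$. I will control its $\mathcal P(^k\ell_2^n)$- and $\mathcal P(^k\ell_\infty^n)$-norms simultaneously with positive probability, and then fill in the intermediate values of $q$ by multilinear interpolation on the associated symmetric $k$-linear form $L$.

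For the endpoint $q=2$: the Rademacher process of \eqref{benvolgut} satisfies $p_\omega(z) = k\,Y_z$, so feeding Lemma~\ref{Rademacher process} and Lemma~\ref{entropy control} into Theorem~\ref{Pisier's result} (with $X = B_{\ell_2^n}$, $d = \|\cdot\|_\infty$, $\psi = \psi_2$) and using $Y_0 = 0$ gives $\mathbb E\|p_\omega\|_{\mathcal P(^k\ell_2^n)} \lesssim k\log^{3/2}(n)$, so by Markov there is an event $A_2$ of probability at least $1/2$ on which $\|p_\omega\|_{\mathcal P(^k\ell_2^n)} \lesssim k\log^{3/2}(n)$. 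For the endpoint $q=\infty$: any partial $S_p(k-1,k,n)$ system satisfies $|\mathcal S| \lesssim n^{k-1}$ (each $k$-block contributes $k$ distinct $(k-1)$-subsets, all pairwise distinct across $\mathcal S$), so plugging this into the Kahane--Salem--Zygmund deviation estimate \eqref{prob ellinfty} yields an event $A_\infty$ of probability $> 1 - 1/(k^2 e^n) > 1/2$ on which $\|p_\omega\|_{\mathcal P(^k\ell_\infty^n)} \leq D\sqrt{n\log(k)\,|\mathcal S|} \lesssim n^{k/2}\sqrt{\log k}$.

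Because $\mathbb P(A_2 \cap A_\infty) > 0$, I fix signs $(c_J)_{J \in \mathcal S}$ in both events, and apply multilinear Riesz--Thorin in each of the $k$ slots of $L$ to obtain $\|L\|_{\mathcal L(^k\ell_q^n)} \leq \|L\|_{\mathcal L(^k\ell_2^n)}^{2/q}\,\|L\|_{\mathcal L(^k\ell_\infty^n)}^{(q-2)/q}$ for $2 \le q \le \infty$. Combined with the polarization estimate \eqref{polarizacion} and the endpoint bounds above, this yields $\|p\|_{\mathcal P(^k\ell_q^n)} \lesssim \log^{3/q}(n)\,n^{(k/2)(q-2)/q}$, which is the first stated bound. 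For $1 \le q \le 2$ the same polynomial satisfies the trivial estimate $\|p\|_{\mathcal P(^k\ell_1^n)} \le 1$ (since $p$ is tetrahedral with unit coefficients), and interpolating on $L$ between $q=1$ and $q=2$ with parameter $\theta = 2(q-1)/q$ produces the remaining exponent $(3q-3)/q$.

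The delicate point is the ``moreover'' clause demanding that $A_{k,q}$ be chosen independent of $k$ when $q \ne 2$: each pass through \eqref{polarizacion} introduces $\lambda(k,\infty)^{(q-2)/q}$, which is super-exponential in $k$. To salvage a $k$-free constant I would interpolate directly on the polynomial $p$ rather than on $L$, writing each variable as $z_j = u_j v_j$ with $u \in B_{\ell_2^n}$ and $v \in B_{\ell_\infty^n}$ and invoking multilinearity of $L$ only once, so that the polarization constant is never raised to a power depending on $q$. Executing this while keeping the logarithmic factors from the $q=2$ step under control is the principal technical hurdle.
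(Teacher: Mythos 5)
Your argument for the two displayed bounds is essentially the paper's own: the same $\ell_2^n$ endpoint (the process \eqref{benvolgut} fed through Lemma~\ref{Rademacher process}, Theorem~\ref{Pisier's result} and Lemma~\ref{entropy control}, then Markov), the same $\ell_\infty^n$ endpoint via \eqref{prob ellinfty} and the cardinality bound $|\mathcal S|\le\frac1k\binom{n}{k-1}$, the same positive-probability intersection to fix the signs, and the same multilinear interpolation on the symmetric form $L$ combined with \eqref{polarizacion}. All exponents come out correctly, so the main estimates are proved.

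The gap is the ``moreover'' clause, which you explicitly leave open and for which you propose an unnecessary detour. No alternative interpolation scheme is needed; what you are missing is that the endpoint constants, if not weakened prematurely, already carry factorial decay that absorbs the polarization constant. Concretely, your own counting argument gives $|\mathcal S|\le\frac1k\binom{n}{k-1}$, hence $n|\mathcal S|\le n^k/k!$, so the $\ell_\infty^n$ endpoint is $D\bigl(\tfrac{\log k}{k!}n^k\bigr)^{1/2}$ rather than merely $\lesssim_k n^{k/2}\sqrt{\log k}$. After interpolation the constant for $2<q<\infty$ then contains $\bigl(\lambda(k,\infty)\tfrac{\sqrt{\log k}}{\sqrt{k!}}\bigr)^{(q-2)/q}$, and since $\lambda(k,\infty)\le\frac{k^{k/2}(k+1)^{(k+1)/2}}{2^kk!}$, the ratio $\lambda(k,\infty)/\sqrt{k!}\to0$ as $k\to\infty$; together with the harmless factor $(MkK)^{2/q}$ this makes $A_{k,q}$ bounded (in fact tending to $0$) for each fixed $q>2$. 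Similarly, for $1\le q<2$ you must use the sharp tetrahedral bound $\|p\|_{\p(^k\ell_1^n)}\le\frac1{k!}$ from \eqref{pols en l1} rather than $\le1$: with $\lambda(k,1)\le k^k/k!$ the $\ell_1$ endpoint contributes $\bigl(\tfrac{k^k}{(k!)^2}\bigr)^{(2-q)/q}$, which decays fast enough to beat the polynomial growth of the $\ell_2$ contribution. With the weakened endpoints you actually wrote down, the constant genuinely grows exponentially in $k$ and the clause fails, so this is not cosmetic --- but the repair is constant bookkeeping within the argument you already have, not the new ``interpolate directly on $p$'' construction you sketch.
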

\begin{proof}

To prove this theorem we will first find a polynomial with small norm both in ${\p(^k\ell_2^n)}$ and in
${\p(^k\ell_\infty^n)}$. For this we use an interesting technique borrowed from the proof of \cite[Lemma~2.1]{CarDim07}, followed by an interpolation argument.

Note first that any $S_p(k-1,k,n)$ partial Steiner system $\mathcal S$ satisfies that $|\mathcal S| \leq \frac{1}{k}\binom{n}{k-1}.$
We use $\mathcal S$ to define a Rademacher process  $(Y_z)_{z \in
B_{\ell_2^n}}$ as in \eqref{benvolgut}.
By Lemma~\ref{Rademacher process}, Theorem~\ref{Pisier's result} and Lemma~\ref{entropy control} there is a
constant $K>0$ such that $\mathbb{E} (\sup_{z \in B_{\ell_2^n}} |Y_z|) \leq K \log^{3/2}(n)$. Therefore, by
Markov's inequality we have
\begin{equation} \label{prob ell2}
\mathbb{P} \{ \omega \in \Omega : \|\sum_{J\in\mathcal S}\varepsilon_J (\omega) z_J \|_{\p(^k\ell_2^n)} \geq
M k K \log^{3/2}(n) \} \leq \frac{1}{M},
\end{equation}
where $M$ is some constant to be determined.
On the other hand, recall that by \eqref{prob ellinfty} we have
$$\mathbb{P} \{  \omega \in \Omega : \| \sum_{J \in \mathcal S} \varepsilon_J (\omega) z_J \|_{\p(^k
\ell_\infty^n)} \geq D \big( n \log(k) |\mathcal S| \big)^{1/2} \}  \leq \frac{1}{k^2 e^n},$$
Therefore, if $M >1+ \frac{1}{k^2e^n -1}$ (note that we can take $M=2$ here) we have the following inequalities for $\omega$ in a positive
measure set
\begin{equation} \label{acotacion normas}
\left\{
\begin{array}{cl}
 \|\sum_{J\in\mathcal S}\varepsilon_J (\omega) z_J \|_{\p(^k\ell_2^n)} & \leq M k K
\log^{3/2}(n), \\
 \| \sum_{J \in \mathcal S} \varepsilon_J (\omega) z_J \|_{\p(^k \ell_\infty^n)} & \leq D \big( n \log(k)
|\mathcal S| \big)^{1/2} \leq D  \Big(\frac{\log(k)}{k}  \binom{n}{k-1}n\Big)^{1/2} \leq D \Big(\frac{\log(k)}{k!} n^{k}\Big)^{1/2}.
\end{array}
\right.
\end{equation}
There is a choice of signs $(c_J)_{J \in \mathcal S}$ such that the polynomial $p(z):=
\sum_{J\in\mathcal S}c_J  z_J$ satisfies the inequalities in \eqref{acotacion normas}.
We now use an interpolation argument to obtain a bound of the norm of $p$ in ${\p(^k\ell_q^n)}$ for
$2<q<\infty$. We consider
the $k$-linear form associated to $p$ then \cite[Theorem~4.4.1]{BerLof76}, together with \eqref{polarizacion}
and \eqref{acotacion normas}, give
\begin{align}
\|p\|_{\p(^k\ell_q^n)} &\leq   \big(Mk K\big)^{2/q} \big(D \lambda(k,\infty)
\frac{\log^{1/2}(k)}{\sqrt{k!}}\big)^{\frac{q-2}{q}}  \; \log^{3/q}(n) n^{\frac{k}{2}
(\frac{q-2}{q})}\\
& \le  \underbrace{\max\{MK,D\} \Big( \frac{k^{\frac{k}{2}} (k+1)^{\frac{k+1}{2}} \sqrt{\log k}}{2^{k} k! \sqrt{k!}} \Big)^{\frac{q-2}{q}}k^{\frac{2}{q}}}_{A_{k,q}}\; \log^{3/q}(n) n^{\frac{k}{2}
(\frac{q-2}{q})} \,.
\end{align}

Note that for $q>2$, $A_{k,q}\to 0$ as $k\to\infty$, and thus we may take a constant independent of $k$ in
this case.

For $q=1$, it is immediately seen that every Steiner unimodular polynomial has norm less than or equal to one.
Actually,  more can be said. Let $P(z)=\sum_{|\alpha|=k}a_\alpha z^\alpha$ be any $k$-homogeneous
polynomial. Then
\begin{align}\label{pols en l1}
 |P(z)| &\le   \sum_{|\alpha|=k}|a_\alpha z^\alpha| \,\le\,
\sup_{|\alpha|=k}\Big\{|a_\alpha|\frac{\alpha!}{k!}\Big\}\sum_{|\alpha|=k}|\frac{k!}{\alpha!} z^\alpha| =
\sup_{|\alpha|=k}\Big\{|a_\alpha|\frac{\alpha!}{k!}\Big\}\Big(\sum_{j=1}^n|z_j|\Big)^k.
\end{align}
In particular, the polynomial $p$ considered above satisfies $\|p\|_{\p(^k\ell_1^n)}\le \frac{1}{k!}$.
Finally, proceeding by interpolation between the $\ell_1^n$ and $\ell_2^n$ cases we obtain that for $1<q<2$,
\[
\|p\|_{\p(^k\ell_q^n)} \leq  \Big( \frac{k^k}{(k!)^{2}} \Big)^{\frac{2-q}{q}} \big(Mk K\log^{3/2}(n)\big)^{\frac{2q-2}{q}} = A_{k,q} \log^{\frac{3q-3}{q}}(n)\,.
\]
Note that also in this case, for every $1 \leq q < 2$ we have $A_{k,q} \to 0$ as $k \to \infty$.
\end{proof}
As was already noted in \cite[Corollary~6.5]{DefGarMae03}, the argument in \eqref{pols en l1} improves the estimates given in
\cite{Boa00} and \cite[Corollary~3.2]{Bay12} for the $q=1$ case.
\begin{remark}\rm
It is not difficult to prove that every 2-homogeneous Steiner unimodular polynomial has norm in
$\p(^2\ell_2^n)$ less than or equal to $\frac12$. It would be interesting to know if there
exists a constant $C$, perhaps depending on $k \geq 3$ and not on $n$,  such that given any $S_p(k-1,k,n)$ partial
Steiner system $\mathcal S$, we can find a $k$-homogeneous unimodular polynomial $p(z):=
\sum_{J\in\mathcal S}c_J  z_J$ with $\|p\|_{\p(^k\ell_2^n)}\le C$.
An affirmative answer to this question would in particular give that the upper bound given by Mantero and Tonge \eqref{q2} for $C_{k,q}(n)$ with $2 \leq q < \infty$  is actually optimal.
\end{remark}

The last ingredient we need for our applications  is the existence of nearly
optimal partial Steiner systems, in the sense that they have many elements. This translates to many unimodular
coefficients of the Steiner polynomials. It is well known that any partial Steiner system $S_p(t,k,n)$ has
cardinality less than or equal to $\binom{n}{t}/\binom{k}{t}$.
A conjecture of Erd\H os and Hanani \cite{ErdHan63}, proved positively by R\"odl \cite{Rod85}, states that
there
exist partial Steiner systems $S_p(t,k,n)$ of cardinality at least $(1-o(1))\binom{n}{t}/\binom{k}{t}$, where
$o(1)$ tends to zero as $n$ goes to infinity. This bound was improved in \cite{AloKimSpe97} (see also \cite{Kim01} for a panoramic overview of the subject), where it is proved  that
there exists a constant $c>0$ such that there exist partial Steiner systems $S_p(k-1,k,n)$ of cardinality  at least
\begin{equation}\label{Alon}
\begin{split}
 \frac{\binom{n}{k-1}}{k}\Big(1-\frac{c}{n^{\frac1{k-1}}}\Big), \quad \textrm{ for }k>3\, ,\\
\frac{\binom{n}{k-1}}{k}\Big(1-\frac{c\log^{3/2}n}{n^{\frac1{k-1}}}\Big), \quad \textrm{ for }k=3\,.
\end{split}
\end{equation}
Taking partial Steiner systems of this cardinality in Theorem \ref{Steiner con norma chica} we have the
following.
\begin{corollary}
 Let $k\ge 3$. Then there exists a $k$-homogeneous Steiner
unimodular polynomial $p$ of $n$ complex variables with at least $\psi(k,n)$ (defined in \eqref{Alon}) coefficients satisfying the estimates in Theorem~\ref{Steiner con norma chica}.
Note that in this case $\psi(k,n) \gg n^{k-1}$.
\end{corollary}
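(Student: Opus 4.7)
The proof is essentially an assembly of two ingredients already in hand, so I would keep it short. The plan is to pick a near-optimal partial Steiner system from the literature and feed it directly into Theorem~\ref{Steiner con norma chica}. Concretely, by the result of Alon–Kim–Spencer cited in \eqref{Alon}, for every $k\geq 3$ and every $n$ large enough there exists a partial Steiner system $\mathcal S = S_p(k-1,k,n)$ whose cardinality is bounded below by the quantity $\psi(k,n)$ appearing in \eqref{Alon}. Since Theorem~\ref{Steiner con norma chica} applies to any $S_p(k-1,k,n)$ partial Steiner system, applying it to this particular $\mathcal S$ yields signs $(c_J)_{J\in \mathcal S}$ such that $p(z) = \sum_{J\in\mathcal S} c_J z_J$ is a $k$-homogeneous Steiner unimodular polynomial satisfying the stated norm estimates, and by construction $p$ has exactly $|\mathcal S|\geq \psi(k,n)$ non-zero coefficients.

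It then remains to check the asymptotic statement $\psi(k,n) \gg n^{k-1}$, which I would verify separately for the two cases in \eqref{Alon}. For $k > 3$ we have
\[
\psi(k,n) = \frac{1}{k}\binom{n}{k-1}\Bigl(1-\frac{c}{n^{1/(k-1)}}\Bigr),
\]
and the factor $1-c/n^{1/(k-1)}$ tends to $1$ (and is in particular bounded below by, say, $1/2$ for all $n$ large enough depending on $k$), while $\binom{n}{k-1}/k \sim n^{k-1}/(k\cdot (k-1)!)$, giving $\psi(k,n) \gg n^{k-1}$ with a constant depending on $k$. For $k=3$ the correction factor is $1-c\log^{3/2}(n)/n^{1/2}$, which also tends to $1$ and is thus bounded below by a positive constant for $n$ large, leading to the same conclusion.

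There is essentially no obstacle here beyond bookkeeping: the combinatorial existence result does the heavy lifting, the probabilistic norm control is already packaged in Theorem~\ref{Steiner con norma chica}, and the asymptotic is an elementary binomial estimate. The only subtlety worth flagging in the write-up is that the constants implicit in $\gg$ depend on $k$, which is harmless since $k$ is fixed throughout the corollary.
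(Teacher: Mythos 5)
Your proposal is correct and follows exactly the route the paper intends: the corollary is stated as an immediate consequence of the Alon--Kim--Spencer existence result \eqref{Alon} combined with Theorem~\ref{Steiner con norma chica}, and the paper gives no further proof beyond that one-line observation. Your additional verification that $\psi(k,n)\gg n^{k-1}$ (with the implicit constant depending on the fixed $k$) is the same elementary binomial estimate the authors take for granted.
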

\begin{remark}\rm \label{existen Steiner}
Very recently, a longstanding open problem in combinatorial design theory was solved by Keevash \cite{Kee}.
 A \textit{Steiner system} $S(t, k, n)$ is a collection of subsets of size $k$ of $\{1,\dots,n\}$ such that
every subset of $t$ elements is contained in exactly one member of the collection of subsets of size $k$.
Keevash's result implies the asymptotic existence of Steiner systems, that is, that given $t<k$, Steiner
systems  $S(t, k, n)$ exist for every sufficiently large $n$ that satisfies some natural divisibility conditions. In
particular, for an infinite number of $n$'s  we may take $\psi(k,n)=\binom{n}{k-1}/{k}$ in the above corollary.
\end{remark}

\section{Estimates on the multivariable von Neumann inequality} \label{prueba}

In this section we estimate the asymptotic failure of different versions of the multivariable von Neumann
inequality for homogeneous polynomials. Before we prove Theorem \ref{main}, let us observe that we modify Dixon's original proof of the lower bound in \eqref{wilco} in several ways.\\
Dixon considered partial Steiner systems $S_{p}([(k-1)/2],k,n)$, for which the number of non-zero coefficients is of the order $n^{[\frac{k-1}{2}]}$. This is not enough to find a good lower bound. Instead, we use partial Steiner systems $S_{p}(k-1,k,n)$. This allows us to have more non-zero coefficients, but also forces us to make a new construction of the Hilbert space and the operators which we feel is closer to that given by Varopoulos in \cite{Var74}.

\begin{proof}[Proof of Theorem~\ref{main}--(\ref{vn l_infty})]
The upper bound was proved in \cite[Theorem~1.2]{Dix76}. Thus  we only have to construct a polynomial, a Hilbert space and
commuting contractions that show that the asymptotic growth of this bound is optimal.\\
Let $n \geq k\ge 3$ and choose a partial Steiner system $S_{p}(k-1,k,n)$, denoted by $\mathcal S$, such that $|\mathcal S| = \psi(k,n)$ as in \eqref{Alon}. By Theorem~\ref{Steiner con norma chica}, see also \eqref{acotacion normas},
there exists a $k$-homogeneous polynomial $p(z)= \sum_{J\in\mathcal S}c_Jz_J,$ with $c_J=\pm1$
for every $J\in\mathcal S$ and such that
\begin{equation}\label{KSZ2}
\|p\|_{\p(^k\ell_\infty^n)}\leq D  \Big(\frac{\log(k)}{k}  \binom{n}{k-1}n\Big)^{1/2} \, .
\end{equation}
Let $\mathcal H$ be the (finite dimensional) Hilbert space which has as orthonormal basis the
following vectors
$$
\left\{
\begin{array}{cl}
 e; & \\
 e(j_1,\dots,j_m) & \textrm{ for } 0\le m\le k-2 \textrm{ and } 1\leq j_1\le\dots\le j_m\leq n;\\
 f_i & \textrm{ for } i=1,\dots,n;\\
 g. &
\end{array}
\right.
$$
Given any subset $\{i_1,\dots,i_r\}\subset\{1,\dots,n\}$, we denote by $[i_1,\dots,i_r]$ its nondecreasing
reordering.\\
We define, for $l=1,\dots,n$, the operators that act as follows on the basis of $\mathcal H$,
$$
\begin{array}{cl}
 T_le=e(l) & \\
 T_le(j_1,\dots,j_m)=e[l,j_1,\dots,j_m], & \textrm{ if } 0\le m< k-2\\
 T_le(j_1,\dots,j_{k-2})=\sum_i \gamma_{\{i,l,j_1,\dots,j_{k-2}\}}f_i, & \\
 T_lf_i=\delta_{li}g, & \\
 T_lg=0, &
\end{array}
$$
where
$$
\gamma_{\{i_1,\dots,i_{k}\}}=\left\{\begin{array}{cl}
 c_{\{i_1,\dots,i_{k}\}}, & \text{ if } \{i_1,\dots,i_{k}\}\in\mathcal S\\
 0 & \text{ otherwise.}
\end{array}\right.
$$
Since $\mathcal S$ is an $S_{p}(k-1,k,n)$ partial Steiner system, $\|T_l\|=1$ for $l=1,\dots,n$. It is easily checked that
the
operators commute. We have
$$
p(T_1,\dots,T_n)e=\sum_{\{i_1,\dots,i_{k}\}\in\mathcal
S}c_{\{i_1,\dots,i_{k}\}}T_{i_1}T_{i_2}\dots T_{i_k}e = \sum_{\{i_1,\dots,i_{k}\}\in\mathcal
S}c_{\{i_1,\dots,i_{k}\}}^2g=|\mathcal S|g =\psi(k,n) g.
$$
Now, using \eqref{KSZ2} we get
\begin{align*}
 \|p(T_1,\dots,T_n)\|_{\mathcal L(\mathcal H)}  & \ge  \|p(T_1,\dots,T_n)  e\|_{\mathcal H}
= \psi(k,n)  \\
& \geq \frac1{D} \Big(\frac{\binom{n}{k-1}}{nk\log(k)}\Big)^{\frac12} \big(1-o(1) \big)\|p\|_{\p(^k\ell_\infty^n)}
\gg n^{\frac{k-2}{2}}\|p\|_{\p(^k\ell_\infty^n)}.
\end{align*}
This gives the desired conclusion.
\end{proof}

\begin{proof}[Proof of Theorem~\ref{main}--\eqref{vn l_q}.]
The upper bound was proved in  \cite[Corollary~11]{ManTon79}. For the lower bound, we take the Hilbert space and the operators $T_1,\dots,T_n$ defined in the proof of
Theorem~\ref{main}--\eqref{vn l_infty}. Then $R_{j}= \frac{T_j}{n^{1/q}}$ for $j=1, \ldots ,n$ clearly satisfy
$\sum_{i=1}^{n} \Vert R_{i} \Vert^{q} \leq 1$. Taking
the polynomial $p$ given by Theorem~\ref{Steiner con norma chica} we have
\begin{align*}
 \|p(R_1, \dots, R_n)\|_{\mathcal L(\mathcal H)}  & \ge  \frac{1}{n^{k/q}} \|p(T_1,\dots,T_n)e\|_{\mathcal H}
= \frac{|\mathcal S|}{n^{k/q}}  \\
 & \ge \frac{\|p\|_{\p(^k\ell_q^n)} |\mathcal S| }{ A_{k,q} \log^{3/q}(n) n^{\frac{k}{2} (\frac{q-2}{q})} \;
n^{k/q}}
\ \ge \
A_{k,q}^{-1} C_k \log^{- 3/q}(n) n^{\frac{k-2}{2}}\|p\|_{\p(^k\ell_2^n)}.
\end{align*}
This concludes the proof of the theorem.
\end{proof}

\subsection{Other possible extensions of the von Neumann inequality for homogeneous polynomials: some particular cases}

Mantero and Tonge \cite[Proposition 17]{ManTon79} also obtained lower bounds for $C_{k,q,r}(n)$,
defined as the least constant $C$ such that
\begin{equation}
\|p(T_1,\dots,T_n)\|_{\mathcal L(\mathcal H)}\le C\;   \sup\{|p(z_1,\dots,z_n)| :
\sum_{j=1}^n|z_j|^q \le 1\} \, ,
\end{equation}
for every $k$-homogeneous polynomial $p$ in  $n$ variables and every $n$-tuple of commuting contractions
$(T_1, \dots, T_n)$ with  $\sum_{i=1}^n \|T_i\|_{\mathcal L(\mathcal{H})}^r \leq 1$.
Proceeding as in the proof of Theorem~\ref{main}--(\ref{vn l_q}), we can show the following.
\begin{proposition}\label{vn q,r}
Let $k\ge 3$, then
\begin{enumerate}
	\item  \label{vn q,r >2} $
\log^{-3/q}(n)n^{k(\frac{1}{2}+\frac{1}{q}-\frac{1}{r})-1} \ll C_{k,q,r}(n)$, for $q\ge 2$ and $1\le r\le\infty$,
	
	\item  \label{vn q,r <2} $
\log^{-3/q'}(n)n^{\frac{k}{r'}-1} \ll C_{k,q,r}(n)$, for $q\le 2$ and $1\le r\le\infty$.
\end{enumerate}
\end{proposition}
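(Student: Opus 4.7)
The plan is to recycle the Hilbert space and commuting operator tuple $(T_1,\dots,T_n)$ built in the proof of Theorem~\ref{main}--(\ref{vn l_infty}), together with the Steiner unimodular polynomial $p$ from Theorem~\ref{Steiner con norma chica}, but to rescale the operators so as to satisfy the new constraint $\sum_i\Vert R_i\Vert^r\leq 1$ instead of the previous one in $\ell_q$. Since $\Vert T_j\Vert=1$ for every $j$, setting $R_j:=T_j/n^{1/r}$ immediately gives $\sum_{j=1}^n\Vert R_j\Vert^r=1$ and the $R_j$ still commute.

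As in the proof of Theorem~\ref{main}--(\ref{vn l_q}), the action on the distinguished basis vector $e$ yields
\[
\Vert p(R_1,\dots,R_n)\Vert_{\mathcal L(\mathcal H)}\,\ge\,\Vert p(R_1,\dots,R_n)e\Vert_{\mathcal H}\,=\,\frac{1}{n^{k/r}}\Vert p(T_1,\dots,T_n)e\Vert_{\mathcal H}\,=\,\frac{|\mathcal S|}{n^{k/r}},
\]
and we choose the partial Steiner system $\mathcal S$ from \eqref{Alon} so that $|\mathcal S|\gg n^{k-1}$. Dividing by the denominator in the definition of $C_{k,q,r}(n)$, namely $\Vert p\Vert_{\mathcal P(^k\ell_q^n)}$, then gives the required lower bound once we insert the estimate from Theorem~\ref{Steiner con norma chica}.

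For part (\ref{vn q,r >2}), the range $q\ge 2$ forces us to use the first branch of Theorem~\ref{Steiner con norma chica}, so $\Vert p\Vert_{\mathcal P(^k\ell_q^n)}\ll \log^{3/q}(n)\, n^{\frac{k}{2}(\frac{q-2}{q})}=\log^{3/q}(n)\,n^{k/2-k/q}$, and we obtain
\[
C_{k,q,r}(n)\,\gg\,\frac{n^{k-1}/n^{k/r}}{\log^{3/q}(n)\,n^{k/2-k/q}}\,=\,\log^{-3/q}(n)\,n^{k(\frac{1}{2}+\frac{1}{q}-\frac{1}{r})-1}.
\]
For part (\ref{vn q,r <2}), the range $q\le 2$ puts us in the second branch of Theorem~\ref{Steiner con norma chica}, so $\Vert p\Vert_{\mathcal P(^k\ell_q^n)}\ll \log^{(3q-3)/q}(n)=\log^{3/q'}(n)$, and a parallel computation yields
\[
C_{k,q,r}(n)\,\gg\,\frac{n^{k-1}/n^{k/r}}{\log^{3/q'}(n)}\,=\,\log^{-3/q'}(n)\,n^{k(1-\frac{1}{r})-1}\,=\,\log^{-3/q'}(n)\,n^{k/r'-1}.
\]

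There is essentially no new technical obstacle: all the hard work, both the combinatorial/operator-theoretic construction and the entropy/Pisier estimate controlling the $\ell_q$-norm of Steiner unimodular polynomials, is already carried out in the earlier sections. The only care required is bookkeeping the exponents produced by the two different rescalings (by $n^{1/r}$ for the operator constraint, versus the $\ell_q$ norm of the polynomial), and remembering to split the argument according to which branch of Theorem~\ref{Steiner con norma chica} applies; the resulting two cases match precisely the two regimes $q\ge 2$ and $q\le 2$ in the proposition.
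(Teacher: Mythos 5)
Your proof is correct and coincides with what the paper intends: the paper itself gives no detailed argument for this proposition, only the remark that one should ``proceed as in the proof of Theorem~\ref{main}--(\ref{vn l_q}),'' and your rescaling by $n^{1/r}$ (rather than $n^{1/q}$) together with the case split according to the two branches of Theorem~\ref{Steiner con norma chica} is precisely that procedure. The exponent bookkeeping, $k-1-\frac{k}{r}-\frac{k}{2}+\frac{k}{q}=k(\frac12+\frac1q-\frac1r)-1$ for $q\ge2$ and $k-1-\frac{k}{r}=\frac{k}{r'}-1$ for $q\le2$, matches the stated bounds.
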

\begin{remark}\rm
The above proposition improves the lower bounds for $C_{k,q,r}$ given in \cite[Proposition~17]{ManTon79}
in all cases but $q\le2$ and $k=3$.
\end{remark}

Another possible multivariable extension of the von Neumann inequality (also studied in \cite{ManTon79}) is
by considering polynomials on commuting operators $T_1,\dots,T_n$ satisfying that for any
pair $h,g$ of norm one vectors in the Hilbert space,
\begin{equation}\label{IVp}
\sum_{j=1}^n|\langle T_jh,g\rangle|^q\le1,
\end{equation}
or, equivalently, that for any vector $\alpha\in\mathbb C^n$ such that $\|\alpha\|_{\ell_{q'}^n}=1$, we have
$$
\Big\|\sum_{j=1}^n\alpha_jT_j\Big\|\le1.
$$
Let $D_{k,q}(n)$,
denote the smallest constant such that
\begin{equation}
\|p(T_1,\dots,T_n)\|_{\mathcal L(\mathcal H)}\le D_{k,q}(n)\;   \sup\{|p(z_1,\dots,z_n)| :
\sum_{j=1}^n|z_j|^q \le 1\} \, ,
\end{equation}
for every $k$-homogeneous polynomial $p$ in  $n$ variables and every $n$-tuple of commuting contractions
$(T_1, \dots, T_n)$ satisfying \eqref{IVp}.
The upper bound obtained in \cite[Proposition~20]{ManTon79} is
$$
D_{k,q}(n)\ll
\left\{\begin{array}{ll}
                         n^{(k-1)(\frac12+\frac1{q})} & \textrm{ for } q\ge2,\\
                         n^{(k-1)(\frac12+\frac1{q'})} & \textrm{ for } q\le2.
                        \end{array}\right.
$$
For $k=3$ and $q=2$ we show that this is optimal up to a logarithmic factor.
\begin{proposition}
We have the following asymptotic behavior: $\displaystyle\frac{n^2}{\log^{15/4} n}\ll D_{3,2}(n)\ll n^2.$
\end{proposition}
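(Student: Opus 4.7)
The plan is to adapt the argument of Theorem~\ref{main}--(\ref{vn l_q}). I take an $S_p(2,3,n)$ partial Steiner system $\mathcal S$ with $|\mathcal S|\sim n^2$ (from \eqref{Alon}), let $\mathcal H$ and the commuting contractions $T_1,\dots,T_n$ be the Hilbert space and operators built in the proof of Theorem~\ref{main}--(\ref{vn l_infty}), and let $p=\sum_{J\in\mathcal S}c_J z_J$ be the corresponding Steiner unimodular polynomial furnished by Theorem~\ref{Steiner con norma chica}, which satisfies $\|p\|_{\p(^3\ell_2^n)}\lesssim\log^{3/2}(n)$. The upper bound $D_{3,2}(n)\ll n^2$ is already given by \cite[Proposition~20]{ManTon79}.

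Condition \eqref{IVp} is equivalent to $\sup_{\|\alpha\|_2\le1}\|\sum_j\alpha_jT_j\|_{\mathcal L(\mathcal H)}\le1$, so the right scaling factor is
\[
\mu_n := \sup_{\|\alpha\|_2\le 1}\bigl\|\sum_{j=1}^n \alpha_j T_j\bigr\|_{\mathcal L(\mathcal H)},
\]
and then $R_j:=T_j/\mu_n$ satisfies \eqref{IVp}. Using $p(T_1,\dots,T_n)e=|\mathcal S|g$ (as in the proof of Theorem~\ref{main}--(\ref{vn l_infty})) together with the $3$-homogeneity of $p$,
\[
D_{3,2}(n)\;\ge\;\frac{\|p(R_1,\dots,R_n)e\|_{\mathcal H}}{\|p\|_{\p(^3\ell_2^n)}}\;=\;\frac{|\mathcal S|}{\mu_n^{\,3}\,\|p\|_{\p(^3\ell_2^n)}}\;\gtrsim\;\frac{n^2}{\mu_n^{\,3}\log^{3/2}(n)}.
\]
Matching the target $n^2/\log^{15/4}(n)$ therefore reduces the whole argument to establishing $\mu_n\lesssim\log^{3/4}(n)$ (observe $15/4=9/4+3/2$).

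This pencil estimate is the main technical obstacle. The block structure $\mathrm{span}(e)\to\mathrm{span}(e(j))\to\mathrm{span}(f_i)\to\mathrm{span}(g)$ of the $T_j$'s reduces $\|\sum_j\alpha_jT_j\|_{\mathcal L(\mathcal H)}$ to $\max(\|\alpha\|_2,\|B^{(\alpha)}\|_{\mathrm{op}})$, where $B^{(\alpha)}$ is the symmetric random $n\times n$ matrix with entries $B^{(\alpha)}_{i,j_0}=\sum_j\alpha_j\gamma_{\{i,j,j_0\}}$. The na\"\i ve polarization bound $\sup_\alpha\|B^{(\alpha)}\|_{\mathrm{op}}=6\|L\|_{\mathrm{tri}}=6\|p\|_{\p(^3\ell_2^n)}\lesssim\log^{3/2}(n)$ would lead only to the weaker exponent $6$. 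To sharpen it I decompose $B^{(\alpha)}=\sum_{T\in\mathcal S}\gamma_T A_T^{(\alpha)}$, where each $A_T^{(\alpha)}$ is a symmetric $3\times 3$ block supported on the three indices of $T$, and use the partial Steiner property to prove the deterministic estimate $\bigl\|\sum_{T\in\mathcal S}(A_T^{(\alpha)})^2\bigr\|_{\mathrm{op}}\le 2\|\alpha\|_2^{\,2}$ (the fact that every pair lies in at most one triple collapses the double sums defining the diagonal and off-diagonal entries). Non-commutative Khintchine then gives the pointwise bound $\mathbb E\|B^{(\alpha)}\|_{\mathrm{op}}\lesssim\|\alpha\|_2\sqrt{\log n}$. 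Upgrading this to the uniform bound $\mu_n\lesssim\log^{3/4}(n)$ is the hard part: it calls for a chaining argument over $\alpha\in B_{\ell_2^n}$ in the spirit of Lemmas~\ref{Rademacher process} and \ref{entropy control}, carefully tuned (mixing the $\ell_2$ and $\ell_\infty$ metrics, exploiting that the sub-Gaussian increment of $\alpha\mapsto\|B^{(\alpha)}\|_{\mathrm{op}}$ controls $\|\alpha-\alpha'\|_2\sqrt{\log n}$ rather than $\|\alpha-\alpha'\|_\infty$) so that the supremum over $\alpha$ costs only an additional $\log^{1/4}(n)$ factor. Once this is in hand the proof is complete.
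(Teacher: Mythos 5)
Your reduction is set up correctly: with $\mu_n=\sup_{\|\alpha\|_2\le1}\|\sum_j\alpha_jT_j\|_{\mathcal L(\mathcal H)}$ one does get $D_{3,2}(n)\ge|\mathcal S|/(\mu_n^3\|p\|_{\p(^3\ell_2^n)})$, and the arithmetic $15/4=3\cdot\tfrac34+\tfrac32$ is right. But the proof is not complete: the whole content of the lower bound is the estimate $\mu_n\lesssim\log^{3/4}(n)$, and you leave it explicitly unproved, offering only a hoped-for chaining scheme. Worse, that target is not reachable by the route you sketch. Your own block reduction gives $\mu_n=\max\bigl(1,\sup_{\|\alpha\|_2\le1}\|B^{(\alpha)}\|_{\mathrm{op}}\bigr)$, and $\sup_{\|\alpha\|_2\le1}\|B^{(\alpha)}\|_{\mathrm{op}}=\sup\{|\sum_{i,j,l}\gamma_{\{i,j,l\}}\beta_i\alpha_jv_l| : \|\alpha\|_2,\|\beta\|_2,\|v\|_2\le1\}$ is exactly the norm of the symmetric trilinear form associated to $6p$; since $\lambda(3,2)=1$, this equals $6\|p\|_{\p(^3\ell_2^n)}$. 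So proving $\mu_n\lesssim\log^{3/4}(n)$ is the same as proving $\|p\|_{\p(^3\ell_2^n)}\lesssim\log^{3/4}(n)$, a strict improvement of Theorem~\ref{Steiner con norma chica} (whose bound is $\log^{3/2}(n)$, and whose possible improvement to a constant is posed as an open question right after it). No chaining of the pointwise matrix--Khintchine bound $\mathbb E\|B^{(\alpha)}\|_{\mathrm{op}}\lesssim\|\alpha\|_2\sqrt{\log n}$ over $B_{\ell_2^n}$ can be expected to deliver this with the entropy estimates available here: Dudley in the $\ell_2$ metric costs $\sqrt n$, and the $\ell_\infty$-metric route is precisely what already produces the exponent $3/2$ in Lemma~\ref{entropy control}. (Note also the circularity: if you could prove the improved bound, you would insert it in the denominator as well and obtain an exponent better than $15/4$.)

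For comparison, the paper does not attempt any pencil estimate of this kind: it rescales the operators by $\|p\|_{\p(^3\ell_2^n)}^{1/2}$ and verifies \eqref{IVp} by the direct computation $\|\sum_j\alpha_jT_jh\|^2=|\langle h,e\rangle|^2\|\alpha\|_2^2+\sum_i|\sum_{j,l}\alpha_j\langle h,e_l\rangle \gamma_{\{i,j,l\}}|^2+|\sum_j\alpha_j\langle h,f_j\rangle|^2$, bounding the middle sum through the trilinear form of $p$. You should confront that step with your identity $\sup_{\|\alpha\|_2\le 1}\|B^{(\alpha)}\|_{\mathrm{op}}=6\|p\|_{\p(^3\ell_2^n)}$: the middle term is the \emph{square} of a trilinear form, so it is bounded by $36\|p\|_{\p(^3\ell_2^n)}^2\|\alpha\|_2^2\|(\langle h,e_l\rangle)_l\|_2^2$, whereas the paper records only the first power of $\|p\|_{\p(^3\ell_2^n)}$; the first-power bound is what yields $\mu_n\lesssim\|p\|^{1/2}$ and the exponent $15/4$, while the squared bound yields only $\mu_n\lesssim\|p\|$ and hence $n^2/\log^{6}(n)\ll D_{3,2}(n)$. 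So your instinct that the pencil bound is the crux is correct, but your proposal does not establish it, and as written your argument proves at most the weaker estimate with $\log^{6}(n)$ in place of $\log^{15/4}(n)$.
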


\begin{proof}
 Let $p(z) = \sum_{J\in\mathcal S} c_J z_J$ be a 3-homogeneous Steiner unimodular polynomial as in Theorem~\ref{Steiner con norma chica} and let
$T_1,\dots,T_n$ be the operators defined in the proof of
Theorem~\ref{main}--\eqref{vn l_infty}.
We prove first that
$\frac{T_1}{\|p\|^{1/2}_{\p(^3\ell^n_2)}},\dots,\frac{T_n}{\|p\|^{1/2}_{\p(^3\ell^n_2)}}$ satisfy \eqref{IVp}. Note that these operators are defined on a $(2n+2)$-dimensional Hilbert
space $\mathcal{H}$ with orthonormal basis $\{e,e_1,\dots,e_n,f_1,\dots,f_n,g\}$.

For $\alpha\in\ell_{2}^n$ and $h\in \mathcal{H}$, (below we take some $\beta$ in the unit ball of $\ell_{2}^n$)
\begin{align*}
\Big\|\sum_j\alpha_jT_jh\Big\|^2 &=\sum_j|\alpha_j\langle h,e\rangle|^2 +\sum_i\Big|\sum_{j,l}\alpha_j\langle
h,e_l\rangle a_{\{i,j,l\}}\Big|^2 +\Big|\sum_j\alpha_j\langle h,f_j\rangle\Big|^2\\
& = |\langle h,e\rangle|^2\|\alpha\|_{\ell_2^n}^2 + \Big( \sum_{i} \beta_{i} \sum_{j,l}\alpha_j\langle
h,e_l\rangle a_{\{i,j,l\}}  \Big)^{2}
 +\Big|\sum_j\alpha_j\langle h,f_j\rangle\Big|^2 \\
&\le |\langle h,e\rangle|^2\|\alpha\|_{\ell_2^n}^2 +\|p\|_{\p(^3\ell^n_2)}\|\alpha\|_{\ell_2^n}^2\|(\langle
h,e_l\rangle)_l\|_{\ell_2^n}^2 +\|\alpha\|_{\ell_2^n}^2\|(\langle h,f_j\rangle)_j\|_{\ell_2^n}^2\\
&\le \|p\|_{\p(^3\ell^n_2)}\|\alpha\|_{\ell_2^n}^2\|h\|_{\mathcal H}^2.
\end{align*}
Therefore,
\begin{align*}
 \Big\|p(\frac{T_1}{\|p\|^{1/2}_{\p(^3\ell^n_2)}}, \dots, \frac{T_n}{\|p\|^{1/2}_{\p(^3\ell^n_2)}})\Big\|_{\mathcal
L(\mathcal H)}  & \ge  \|p\|_{\p(^3\ell^n_2)}^{-3/2}\|p(T_1,\dots,T_n)e\|_{\mathcal H} \\
& = \|p\|_{\p(^3\ell^n_2)}^{-3/2}|\mathcal S|  \gg \|p\|_{\p(^3\ell^n_2)} \frac{n^2}{\log^{15/4} n},
\end{align*}
and this concludes the proof.
\end{proof}

\subsection*{Acknowledgements} We wish to thank the referee for her/his comments. Also, we would like to warmly thank our friend Michael Mackey for his careful reading and suggestions that improved considerably the
final presentation of the paper.

\end{document}